\definecolor{verylightblue}{rgb}{.855,.89,1.0}
\definecolor{lightbluegray}{rgb}{.788,.847,0.887}
\def\watermarktext{\put(-10,-350){%\scalebox{4}{\color{lightbluegray}WATERMARK}
\rotatebox{%54.75
90}{\Huge\bf\color{lightbluegray}%{cyan}%{blue}
{\scriptsize (for arXiv v1.28092015)}
}}}
\newtheorem{theorem}{Theorem}[section]
\newtheorem{proposition}[theorem]{Proposition}
\newtheorem{corollary}[theorem]{Corollary}
\newtheorem{lemma}[theorem]{Lemma}
\newtheorem{note}[theorem]{Note}
\newtheorem*{mainlemma}{Main Lemma}
\def\im{\mathop{\mathrm{Im}}\nolimits}
\def\rank{\mathop{\mathrm{rank}}\nolimits}
\def\N{\mathbb N}
\def\Z{\mathbb Z}
\def\R{\mathbb R}
\def\Q{\mathbb Q}
\def\O{\mathcal{O}}
\def\T{\mathcal{T}}
\def\S{\mathcal{S}}
\def\E{\mathcal{E}}
\newcommand{\lastpage}{\addresss} 
\newcommand{\addresss}{\small \sf  

\noindent{\sc Ilinka Dimitrova}, 
Faculty of Mathematics and Natural Science, 
South-West University "Neofit Rilski", 
2700 Blagoevgrad, 
Bulgaria;  
email: ilinka\_dimitrova@swu.bg

\medskip 

\noindent{\sc V\'\i tor H. Fernandes}, 
CMA, Departamento de Matem\'atica, 
Faculdade de Ci\^encias e Tecnologia, 
Universidade NOVA de Lisboa, 
Monte da Caparica, 
2829-516 Caparica, 
Portugal; 
e-mail: vhf@fct.unl.pt

\medskip

\noindent{\sc J\"{o}rg Koppitz}, 
Institute of Mathematics, 
University of Potsdam, 
14469 Potsdam, 
Germany; 
email: koppitz@uni-potsdam.de 
}
\title{A note on generators of the endomorphism semigroup of an infinite countable chain}
\author{Ilinka Dimitrova, V\'\i tor H. Fernandes\footnote{This work was developed within the 
FCT Project UID/MAT/00297/2013 of CMA and of Departamento de Matem\'atica da Faculdade de Ci\^encias e Tecnologia da Universidade Nova de Lisboa.}~ 
and J\"{o}rg Koppitz}
\begin{document}

\maketitle

\begin{abstract}
In this note, we consider the semigroup $\O(X)$ of all order endomorphisms of an infinite chain $X$ and the subset $J$ of $\O(X)$ 
of all transformations $\alpha$ such that $|\im(\alpha)|=|X|$. 
For an infinite countable chain $X$, we give a necessary and sufficient condition on $X$ for $\O(X) = \langle J \rangle$ to hold. 
We also present a sufficient condition on $X$ for 
$\O(X) = \langle J \rangle$ to hold, for an arbitrary infinite chain $X$. 
\end{abstract}

\medskip

\noindent{\small 2010 \it Mathematics subject classification: \rm 20M20, 20M10.}

\noindent{\small\it Keywords: \rm infinite chain, endomorphism semigroup, generators, relative rank.}

\section*{Introduction}

The \textit{rank} of a semigroup $S$ is the minimum cardinality of a 
generating set of $S$. For a countable semigroup $S$, in particular, for a finitely generated semigroup $S$, 
determining the rank of $S$ is a natural question. 
Contrariwise, for an uncountable semigroup $S$, this concept has no interest, since the rank of
$S$ is always $|S|$. This last fact leads to the following notion. 
For a subset $A$ of a semigroup $S$, the \textit{relative rank} of $S$ modulo $A$ is the minimum cardinality of a 
subset $B$ of $S$ such that $\langle A \cup B \rangle = S$. This cardinal is denoted by $\rank(S:A)$.
It follows immediately from the definition that $\rank(S:A) = \rank(S: \langle A \rangle )$ and that $\rank(S:A) = 0$ if and
only if $A$ is a generating set of $S$.

The notion of relative rank was introduced by Ru\v skuc in \cite{Ruskuc:1994}, who proved  that the rank of a finite Rees matrix semigroup
$\mathcal{M}[G; I,\Lambda; P]$, with the sandwich matrix $P$ in normal form, is equal to
$\textrm{max}\{|I|,|\Lambda|, \rank (G : H)\}$, where $H$ is the subgroup of $G$ generated by the
entries of $P$.
In \cite{Howie&Ruskuc&Higgins:1998}, Howie et al. considered the relative ranks of the full transformation
semigroup $\T(X)$ on $X$, where $X$ is an infinite set, modulo some distinguished subsets of $\T(X)$. 
They showed that $\rank (\T(X) : \S(X)) = 2$, $\rank (\T(X) : \E(X)) = 2$ and $\rank (\T(X) : J) = 0$, where $\S(X)$ is the symmetric group on $X$, $\E(X)$ is the set of all idempotent transformations on $X$ and $J$ is the top $\mathcal{J}$-class of $\T(X)$, i.e. 
$J=\{\alpha\in\T(X)\mid |\im(\alpha)|=|X|\}$. 

\smallskip 

Throughout this paper, we will represent a chain only by its support set and, as usual, 
its order by the symbol $\le$. Let $X$ be a chain.  
A transformation $\alpha$ of $X$ is said to be \textit{order-preserving} or an (order) \textit{endomorphism} of $X$ if $x\le y$ implies $x\alpha\le y\alpha$, for all $x,y\in X$. We denote by $\O(X)$ the subsemigroup of $\T(X)$ of all (order) endomorphisms of $X$. 

\smallskip 

For a finite chain $X$, it is well known, and clear, that $\O(X)$ is a regular semigroup. 
The problem for an infinite chain $X$ is much more involved. 
Nevertheless, more generally, a characterization of those posets $P$ for which the semigroup of all endomorphisms of $P$ is regular was done by A\v\i zen\v stat in 1968 
\cite{Aizenstat:1968} and, independently, by Adams and Gould in 1989 \cite{Adams&Gould:1989}.  

Let $X$ be an infinite chain.  
A useful regularity criterion for the elements of $\O(X)$ was proved  in \cite{Mora&Kemprasit:2010} by Mora and Kemprasit, who 
deduced several previous known results based on it:  for instance, that $\O(\Z)$ is regular while $\O(\Q)$ and $\O(\R)$ are not regular, 
by considering their usual orders. 
In \cite{Fernandes&Honyam&Quinteiro&Singha:2014}, 
Fernandes et al. described the largest regular subsemigroup of $\O(X)$ and also Green's relations on $\O(X)$. 
The relative rank of $\T(X)$ modulo the subsemigroup $\O(X)$ was considered by Higgins et al. in \cite{Higgins&Mitchell&Ruskuc:2003}.
They showed that $\rank(\T(X) : \O(X)) = 1$, when $X$ is an arbitrary countable chain or an arbitrary well-ordered set, 
while $\rank(\T(\R) : \O(\R))$ is uncountable, by considering the usual order of $\R$. 

\smallskip 

For a fixed chain $X$, consider the following two subsets of the semigroup $\O(X)$:  
$$
J=\{\alpha\in\O(X)\mid |\im(\alpha)|=|X|\}
\quad\text{and}\quad 
J_f=\{\alpha\in\O(X)\mid |\im(\alpha)|<\aleph_0\}.
$$
Notice that $J_f$ is clearly an ideal of $\O(X)$. On the other hand, unlike the analogous set for $\T(X)$, $J$ is not necessarily a 
$\mathcal{J}$-class of $\O(X)$ (see \cite{Fernandes&Honyam&Quinteiro&Singha:2014}). 

In this note we study the relative rank of the semigroup $\O(X)$ modulo $J$. 
For an infinite countable chain $X$, we give a necessary and sufficient condition on $X$ for $\O(X) = \langle J \rangle$ to hold 
(notice that, for a finite $X$, $\O(X)=\langle J\rangle$ if and only if $|X|=1$). We also present a sufficient condition on $X$ for 
$\O(X) = \langle J \rangle$ to hold, for an arbitrary infinite chain $X$. 

\medskip 

For general background on Semigroup Theory, 
we refer the reader to Howie's book \cite{Howie:1995}.

\section{Main results}

Let $X$ be an infinite chain. Let $x\in X$ and define 
$$
(x]=\{y\in X\mid y\le x\}\quad\text{and}\quad [x)=\{y\in X\mid x\le y\} 
$$
(i.e the left and right order ideals generated by $x$). 
Define also
$$
X^0=\{x\in X\mid |(x]|=|X|=|[x)|\},
$$
$$
X^-=\{x\in X\mid |(x]|<|X|\}
$$
and
$$
X^+=\{x\in X\mid |[x)|<|X|\}.
$$
Notice that, since $X$ is an infinite set, if $x\in X^-$ (respectively, $x\in X^+$) then 
$|X|=|[x)|$ (respectively, $|X|=|(x]|$). Hence $X$ is a disjoint union of $X^-$, $X^0$ and $X^+$. 

\smallskip 

Let us consider the sets $\N$, $\Z_-=\Z\setminus(\N\cup\{0\})$, $\Z$, $\Q$ and $\R$, with their usual orders. Then, we have: 
\begin{enumerate}
\item $X^-=\N$, $X^0=\emptyset$ and $X^+=\emptyset$, if $X=\N$;
\item $X^-=\emptyset$, $X^0=\emptyset$ and $X^+=\Z_-$, if $X=\Z_-$;
\item $X^-=\emptyset$, $X^0=X$ and $X^+=\emptyset$, for $X\in\{\Z,\Q,\R\}$. 
\end{enumerate}

\smallskip 

Recall that, given two posets $P$ and $Q$ with disjoint supports, the \textit{ordinal sum} $P\oplus Q$ of $P$ and $Q$ (by this order) is the poset with support $P\cup Q$ such that $P$ and $Q$ are subposets of 
$P\oplus Q$ and $x<y$, for all $x\in P$ and $y\in Q$. This operation on posets is associative (but not commutative). For our purposes, it is convenient to admit empty posets.  
 
\smallskip 

Let $\overset{\mbox{\scalebox{0.4}{$\rightarrow\!\!\cdot\!\!\leftarrow$}}}{\Z}$ be the chain 
$\N\oplus\{0\}\oplus\Z_-$, with the usual orders on $\N$ and $\Z_-$. 
Then, being $X=\overset{\mbox{\scalebox{0.4}{$\rightarrow\!\!\cdot\!\!\leftarrow$}}}{\Z}$, 
we have $X^-=\N$, $X^0=\{0\}$ and $X^+=\Z_-$. 

\smallskip 

By considering $X^-$, $X^0$ and $X^+$ as subposets of $X$, we have the following decomposition of $X$: 

\begin{lemma}
Let $X$ be an infinite chain. Then $X=X^-\oplus X^0\oplus X^+$.
\end{lemma}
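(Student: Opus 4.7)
The plan is to reduce the statement to two subclaims: first, the set-theoretic equality $X=X^-\cup X^0\cup X^+$ as a disjoint union (already essentially observed in the paragraph preceding the lemma), and second, the order-theoretic claim that every element of $X^-$ lies strictly below every element of $X^0\cup X^+$ and every element of $X^0$ lies strictly below every element of $X^+$. Once both subclaims are in hand, the definition of ordinal sum gives the result directly.

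For the first subclaim, covering is immediate by case analysis: if $|(x]|<|X|$ then $x\in X^-$, while if $|(x]|=|X|$ then $x\in X^0$ or $x\in X^+$ according to whether $|[x)|=|X|$ or $|[x)|<|X|$. Disjointness of $X^0$ from the other two sets is by definition; the only nontrivial case is $X^-\cap X^+=\emptyset$. Here, since $X$ is a chain we have $(x]\cup[x)=X$, so if both $|(x]|<|X|$ and $|[x)|<|X|$ then $|X|\le |(x]|+|[x)|<|X|$, using the elementary fact that an infinite cardinal is not the sum of two strictly smaller cardinals; this is a contradiction.

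For the second subclaim, I will argue by contraposition using the monotonicity $a\le b\Rightarrow (a]\subseteq(b]$ and $[b)\subseteq[a)$. For example, if $x\in X^-$ and $y\in X^0\cup X^+$, the relation $y\le x$ would yield $|(y]|\le|(x]|<|X|$, forcing $y\in X^-$, contrary to disjointness; totality of the chain then gives $x<y$. The inequality between $X^0$ and $X^+$ follows dually from $[x)\subseteq[y)$ whenever $y\le x$. I do not expect any genuine obstacle in this proof: the argument is a direct unpacking of the definitions, and the only nonformal ingredient is the cardinal-arithmetic fact invoked in the disjointness step.
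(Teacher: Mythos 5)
Your proof is correct and follows essentially the same route as the paper: the paper records the disjoint-union fact (via the same cardinal observation that $x\in X^-$ forces $|[x)|=|X|$, equivalent to your sum-of-cardinals argument) in the paragraph preceding the lemma, and its proof of the lemma consists exactly of your second subclaim, using the monotonicity $b\le a\Rightarrow(b]\subseteq(a]$ to derive a cardinality contradiction and then invoking totality of the chain. The only difference is presentational: you spell out the covering and disjointness inside the proof, whereas the paper treats them as already established.
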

\begin{proof}
First, let $a\in X^-$ and $b\in X^0\cup X^+$. 
If $b\le a$ then $(b]\subseteq (a]$ and so $|X|=|(b]|\le |(a]|<|X|$, a contradiction. Then $a<b$. 
On the other hand, given $a\in X^-\cup X^0$ and $b\in X^+$, by a dual reasoning, we may show that $a<b$.  
%If $b\le a$ then $[a)\subseteq [b)$ and so $|X|=|[a)|\le |[b)|<|X|$, a contradiction. Then $a<b$. 
This proves the lemma.
\end{proof}

\begin{note}\label{n1}
Let $X$ be an infinite chain and let $\alpha\in\O(X)$. If there exist $x^+\in X^+$ and $x^-\in X^-$ such that $x^+\alpha=x^-$ 
or $x^-\alpha=x^+$ then $\alpha\not\in J$. \em 

\smallskip 

In fact, suppose that $x^+\alpha=x^-$ (the other case can be treated dually). Then $\im(\alpha)\subseteq (x^-]\cup[x^+)\alpha$ and so 
$|\im(\alpha)|\le |(x^-]|+|[x^+)\alpha|\le |(x^-]|+|[x^+)|<|X|+|X|=|X|$, i.e. $\alpha\not\in J$, as required. 
\end{note}

\begin{note}\label{n2}
Let $X$ be an infinite chain such that $X^0=\emptyset$. Let $\alpha\in\O(X)$ be such that  $x^+\alpha=x^-$ 
or $x^-\alpha=x^+$, for some $x^+\in X^+$ and $x^-\in X^-$. Let $\alpha_1,\alpha_2,\ldots,\alpha_n\in\O(X)$ be such that 
$\alpha=\alpha_1\alpha_2\cdots\alpha_n$. Then $|\im(\alpha)|\le|\im(\alpha_i)|<|X|$ (and so $\alpha,\alpha_i\not\in J$), for some $i=1,2,\ldots,n$. \em 

\smallskip 

In fact, for the case $x^+\alpha=x^-$ (the other case is dual), let $i=\text{min}\{j\in\{1,\ldots,n\}\mid x^+\alpha_1\cdots\alpha_j\in X^-\}$. 
Then $x^+\alpha_1\cdots\alpha_{i-1}\in X^+$ (for $i=1$ the expression $x^+\alpha_1\cdots\alpha_{i-1}$ has the meaning of $x^+$) 
and $(x^+\alpha_1\cdots\alpha_{i-1})\alpha_i\in X^-$, since $X^0=\emptyset$. Hence, by Note \ref{n1}, $\alpha_i\not\in J$. 
On the other hand, from the equality $\alpha=\alpha_1\alpha_2\cdots\alpha_n$, it follows that $|\im(\alpha)|\le|\im(\alpha_j)|$ 
(indeed $\im(\alpha_1\cdots\alpha_j)\subseteq\im(\alpha_j)$, whence  
$\im(\alpha)=(\im(\alpha_1\cdots\alpha_j))(\alpha_{j+1}\cdots\alpha_n)
\subseteq (\im(\alpha_j))(\alpha_{j+1}\cdots\alpha_n)$ and so 
$|\im(\alpha)|\le |(\im(\alpha_j))(\alpha_{j+1}\cdots\alpha_n)| \le |\im(\alpha_j)|$), 
for all $j\in\{1,\ldots,n\}$, and so $|\im(\alpha)|\le|\im(\alpha_i)|<|X|$, as required. 
\end{note}

This last note can be rewritten as follows: 

\begin{lemma}\label{x0}
Let $X$ be an infinite chain such that $X^0=\emptyset$ and let $\alpha\in\O(X)$ be such that $X^+\alpha\cap X^-\ne\emptyset$ or $X^-\alpha\cap X^+\ne\emptyset$. Then $\alpha\not\in\langle J\rangle$. 
\end{lemma}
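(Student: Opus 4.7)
The plan is to derive the statement as a near-immediate contrapositive reformulation of Note \ref{n2}. Suppose, for contradiction, that $\alpha\in\langle J\rangle$. Then there exist $\alpha_1,\alpha_2,\ldots,\alpha_n\in J$ such that $\alpha=\alpha_1\alpha_2\cdots\alpha_n$. The hypothesis $X^+\alpha\cap X^-\ne\emptyset$ or $X^-\alpha\cap X^+\ne\emptyset$ supplies precisely a pair $x^+\in X^+$, $x^-\in X^-$ with $x^+\alpha=x^-$ or $x^-\alpha=x^+$; that is, exactly the hypothesis of Note \ref{n2}.

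Applying Note \ref{n2} to this factorization of $\alpha$ yields some index $i\in\{1,\ldots,n\}$ with $|\im(\alpha_i)|<|X|$, so $\alpha_i\notin J$. This contradicts the choice of the $\alpha_j$'s as elements of $J$, and hence $\alpha\notin\langle J\rangle$.

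The lemma is really just a repackaging of Note \ref{n2} into a statement about the subsemigroup $\langle J\rangle$, so no new obstacle arises. The only small care needed is to observe that the hypothesis is symmetric in the two cases $x^+\alpha=x^-$ and $x^-\alpha=x^+$ (both of which are covered by Note \ref{n2}, the second by a dual argument already indicated there), and that Note \ref{n2} is applied to the fixed factorization in hand; the identity $\alpha=\alpha_1\cdots\alpha_n$ rather than membership in $\langle J\rangle$ is what actually gets used.
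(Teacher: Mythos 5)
Your proof is correct and matches the paper exactly: the paper introduces Lemma \ref{x0} with the words ``This last note can be rewritten as follows,'' i.e.\ it is precisely the contrapositive repackaging of Note \ref{n2} that you describe. Nothing further is needed.
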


Before presenting our next note, we introduce the following (natural) notation. 
For $x\in X$ and $Y\subseteq X$, by $x<Y$ (respectively, $x>Y$) we mean that $x<y$ 
(respectively, $x>y$), for all $y\in Y$. 

\begin{note}\label{n3}\em 
Let $X$ be an infinite chain and let $\alpha\in\O(X)$. 
\begin{enumerate}
\item\em If $b\in\im(\alpha)$ and there exists no element $c\in X$ such that $c<b\alpha^{-1}$ then $\im(\alpha)\subseteq[b)$. \em 

In fact, let $y\in\im(\alpha)$. Take $x\in y\alpha^{-1}$. Then $x\not<b\alpha^{-1}$ and so there exists $a\in b\alpha^{-1}$ such that $a\le x$. 
It follows that $b=a\alpha\le x\alpha=y$, whence $y\in[b)$, as required. 

\item\em If $\alpha\in J$ and $b\in\im(\alpha)\cap X^+$ then there exists an element $c\in X$ such that $c<b\alpha^{-1}$. \em 

In fact, if there exists no element $c\in X$ such that $c<b\alpha^{-1}$ then, by 1 above, we have $\im(\alpha)\subseteq[b)$ and, as $b\in X^+$, it follows $|\im(\alpha)|\le|[b)|<|X|$, whence $\alpha\not\in J$, a contradiction. 

\item\em  If $\alpha\in J$ and $y\in X^+$ then there exists an element $b\in\im(\alpha)$ such that $b<y$. \em

In fact, if $y\le b$, for all $b\in\im(\alpha)$, then $\im(\alpha)\subseteq[y)$ and, as $y\in X^+$,  
it follows $|\im(\alpha)|\le|[y)|<|X|$, whence $\alpha\not\in J$, a contradiction. 
\end{enumerate}
\end{note}

By combining 2 and 3 of the previous note, it follows immediately: 

\begin{note} \label{n4}
Let $X$ be an infinite chain such that $X=X^+$, let  $\alpha,\beta\in J$ and let $b\in\im(\alpha)$. Then there exist $c\in X$ and 
$b'\in\im(\beta)$ such that $b'<c<b\alpha^{-1}$. 
\end{note} 

From 3 of Note \ref{n3}, if $X=X^+$, it is clear that $\im(\alpha)$ has no lower bounds, for all $\alpha\in J$. Moreover, we have:

\begin{lemma}\label{x+}
Let $X$ be an infinite chain such that $X=X^+$ (respectively, $X=X^-$) and let $\alpha\in\langle J\rangle$. Then $\im(\alpha)$ has no minimum (respectively, maximum). In particular $J_f\cap\langle J\rangle=\emptyset$. 
\end{lemma}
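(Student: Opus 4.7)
The plan is to prove a slightly stronger statement, namely that for $X=X^+$ and any $\alpha\in\langle J\rangle$, the set $\im(\alpha)$ is \emph{unbounded below} in $X$, in the sense that for every $y\in X$ there exists $z\in\im(\alpha)$ with $z<y$, and then derive the lemma from this. The strengthening is what makes induction on factorisation length close, because ``no minimum'' for $\im(\beta)$ does not obviously transfer to $\im(\beta\alpha_n)$ under post-composition with an order-preserving map, whereas ``unbounded below in $X$'' does, thanks to item 3 of Note \ref{n3}.

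I would run the induction on the length $n$ of a factorisation $\alpha=\alpha_1\alpha_2\cdots\alpha_n$ with each $\alpha_i\in J$. The base case $n=1$ is exactly item 3 of Note \ref{n3}: for every $y\in X=X^+$ there exists $b\in\im(\alpha_1)$ with $b<y$. For the inductive step, write $\alpha=\beta\alpha_n$ with $\beta=\alpha_1\cdots\alpha_{n-1}$, and let $y\in X$. Applying item 3 of Note \ref{n3} to $\alpha_n\in J$ and $y\in X^+$ yields some $x\in X$ with $x\alpha_n<y$; the inductive hypothesis then supplies $x'\in\im(\beta)$ with $x'<x$, and order-preservation of $\alpha_n$ gives $x'\alpha_n\le x\alpha_n<y$ with $x'\alpha_n\in\im(\beta)\alpha_n=\im(\alpha)$, closing the induction.

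The lemma now follows at once. If $m$ were a minimum of $\im(\alpha)$, unboundedness below would produce some $z\in\im(\alpha)$ with $z<m$, contradicting minimality of $m$. The case $X=X^-$ is handled by the order-dual argument and yields no maximum. The ``in particular'' clause is then immediate, since every nonempty finite subset of a chain has both a minimum and a maximum, so no $\alpha\in J_f$ can lie in $\langle J\rangle$ under either hypothesis. I do not anticipate a serious obstacle; the only real decision is to choose the stronger inductive invariant, after which the verification is a direct appeal to item 3 of Note \ref{n3} together with the order-preservation of the final factor.
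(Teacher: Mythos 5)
Your proof is correct, and it is organized genuinely differently from the paper's. The paper argues by contradiction: assuming $b_n=\mathrm{min}\im(\alpha)$ exists, it builds, working \emph{backwards} through the factors $\alpha_n,\alpha_{n-1},\ldots,\alpha_1$, two chains $c_n,\ldots,c_1$ and $b_{n-1},\ldots,b_1$ with $b_{i-1}<c_i<b_i\alpha_i^{-1}$ (via Note \ref{n4}, which needs item 2 of Note \ref{n3} and its preimage comparisons), verifies $c_i\alpha_i<b_i$, and then shows by a forward induction that $c_1\alpha<b_n$. You instead strengthen the statement to ``$\im(\alpha)$ is unbounded below in $X$'' and run a single forward induction on the factorisation length; as you note, this strengthening is exactly what survives post-composition with an order-preserving map (``no minimum'' alone would not, since a factor could collapse an initial segment), so the inductive step needs only item 3 of Note \ref{n3} together with order-preservation, and Note \ref{n4} is not needed at all. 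Your version is shorter and avoids the slightly delicate checks $c_i\alpha_i<b_i$; the paper's version produces an explicit witness $c_1$ with $c_1\alpha$ strictly below the putative minimum, which is the same information packaged as a contradiction. The derivation of the ``in particular'' clause is the same in both.
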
  
\begin{proof} We prove this result for $X=X^+$. The case $X=X^-$ is dual. 

By contradiction, let us suppose that $\im(\alpha)$ has minimum. Denote $\mathrm{min}\im(\alpha)$ by $b_n$. 

As $\alpha\in\langle J\rangle$, we have $\alpha=\alpha_1\alpha_2\cdots\alpha_n$, for some $\alpha_1,\alpha_2,\ldots,\alpha_n\in J$. 

Notice that, since $b_n\in\im(\alpha)$, we also have $b_n\in\im(\alpha_n)$. By applying Note \ref{n4}, we find elements $c_n\in X$ and 
$b_{n-1}\in\im(\alpha_{n-1})$ such that 
$$
b_{n-1}<c_n<b_n\alpha_n^{-1}.
$$
By applying again Note \ref{n4}, we can take elements $c_{n-1}\in X$ and 
$b_{n-2}\in\im(\alpha_{n-2})$ such that 
$$
b_{n-2}<c_{n-1}<b_{n-1}\alpha_{n-1}^{-1}.
$$
Moreover, by Note \ref{n4}, we may recursively construct two 
sequences 
$$
\mbox{$c_n,c_{n-1},\ldots,c_2 \quad\text{and}\quad b_{n-1},b_{n-2},\ldots,b_1$} 
$$
of elements of $X$ such that $b_{i-1}\in\im(\alpha_{i-1})$ and  
$$
b_{i-1}<c_i<b_i\alpha_i^{-1}, 
$$
 for $i=2,\ldots,n$. In addition, by Note \ref{n3}, we may also consider an element $c_1\in X$ such that $c_1<b_1\alpha_1^{-1}$. 
 
Let $i=1,2,\ldots,n$. Then,  $c_i\alpha_i <b_i$. In fact, since $c_i<b_i\alpha_i^{-1}$, we get $c_i\not\in b_i\alpha_i^{-1}$, whence $c_i\alpha_i\ne b_i$, and, given $a\in b_i\alpha_i^{-1}$, we have $c_i<a$ and so $c_i\alpha_i\le a\alpha_i=b_i$. 

Next, by induction on $i$, we prove that $c_1\alpha_1\alpha_2\cdots\alpha_i<b_i$, for $i=1,2,\ldots,n$. 
Let $i=1$. Then, the inequality $c_1\alpha_1<b_1$ was already proved above. 
Hence, let $i>1$ and suppose that $c_1\alpha_1\alpha_2\cdots\alpha_{i-1}<b_{i-1}$, by induction hypothesis. 
Since $b_{i-1}<c_i$, we have $c_1\alpha_1\alpha_2\cdots\alpha_{i-1}<c_i$ and so 
$c_1\alpha_1\alpha_2\cdots\alpha_{i-1}\alpha_i\le c_i\alpha_i<b_i$, as required. 

Hence, in particular, we have $c_1\alpha=c_1\alpha_1\alpha_2\cdots\alpha_n<b_n=\mathrm{min}\im(\alpha)$, which is a contradiction. 
Therefore, $\im(\alpha)$ has no minimum, as required. 
\end{proof}

Next, we state our fundamental lemma. 

\begin{mainlemma}
Let $X$ be an infinite chain. Then  $J_f\subseteq\langle J\rangle$ if and only if $X^0\neq\emptyset$.
\end{mainlemma}   
\begin{proof}
First, suppose that $X^0=\emptyset$. If $X^+=\emptyset$ or $X^-=\emptyset$ then, by Lemma \ref{x+}, we have $J_f\cap\langle J\rangle=\emptyset$, whence $J_f\not\subseteq\langle J\rangle$ (notice that $J_f\ne\emptyset$). 
On the other hand, admit that $X^+\ne\emptyset$ and $X^-\ne\emptyset$. Fix $a\in X^-$ and let $\alpha\in\O(X)$ be the constant transformation with image $\{a\}$. Then $X^+\alpha\cap X^-\ne\emptyset$ and so, by Lemma \ref{x0}, $\alpha\not\in\langle J\rangle$. Since $\alpha\in J_f$, in this case, 
we also obtain $J_f\not\subseteq\langle J\rangle$. 

\smallskip 

Conversely, suppose that $X^0\neq\emptyset$ and fix an element $0\in X^0$. Let $\alpha\in J_f$. 

Suppose, without loss of generality, that $0\alpha \leq 0$ (the case $0\alpha\ge 0$ can be treated dually). 

We begin by defining a transformation $\beta\in\O(X)$ by
$$
x\beta = \left\{\begin{array}{ll}
             x\alpha, & x\leq0 \\
             x, &  x>0.
           \end{array}\right.
$$
Next, let $\im(\alpha) = \{a_1 < a_2 <  \cdots < a_n\}$, $n\in\N$, 
and suppose that $0\alpha = a_i < a_{i+1} < \cdots < a_{i+k} \leq 0$, with  $a_{i+k+1} > 0$ 
or $i + k = n$, for some $i \in \{1, \ldots, n\}$ and a non-negative integer $k$. Then, for $0\leq j \leq k$, 
we define transformations $\gamma_1^{(j)}$ and $\gamma_2^{(j)}$ of $\O(X)$ by
$$
x\gamma_1^{(j)} = \left\{\begin{array}{ll}
             x, &  x < 0 \\
             0, &  x \geq 0 \mbox{ and } x \ngtr a_{i+j}\alpha^{-1} \\
             x, & x > a_{i+j}\alpha^{-1}
           \end{array}\right.
\!\!\!\text{and}~
x\gamma_2^{(j)} = \left\{\begin{array}{ll}
             x, & x < a_{i+j} \\
             a_{i+j}, &  a_{i+j} \leq x \leq 0 \\
             x, &  x > 0.
           \end{array}\right.
$$
Finally, we define a transformation $\delta\in\O(X)$ as being the identity map on $X$ if $i+k = n$ and by
$$
x\delta = \left\{\begin{array}{ll}
             x, &  x \leq 0 \\
             a_{i+k+1}, &  x > 0 \mbox{ and } x \ngtr a_{i+k+1}\alpha^{-1} \\
             x\alpha, &  x > a_{i+k+1}\alpha^{-1}
           \end{array}\right.
$$
otherwise. 

Since $0\in X^0$, it is clear that $\beta, \gamma_1^{(0)}, \gamma_2^{(0)}, \ldots, \gamma_1^{(k)}, \gamma_2^{(k)},\delta \in J$. 
Moreover, we have $\alpha = \beta\gamma_1^{(0)}\gamma_2^{(0)}\cdots\gamma_1^{(k)}\gamma_2^{(k)}\delta$. 
In fact, taking $x \in X$, we may consider three cases:
\begin{enumerate}
\item  If $x \leq 0$ then
$$
(x)\beta\gamma_1^{(0)}\gamma_2^{(0)}\cdots\gamma_1^{(k)}\gamma_2^{(k)}\delta =
(x\alpha)\gamma_1^{(0)}\gamma_2^{(0)}\cdots\gamma_1^{(k)}\gamma_2^{(k)}\delta =
x\alpha,
$$
since $x\alpha \leq a_i$; 

\item  If $x > 0$ and $x\alpha \leq 0$ then there exists $j \in \{0,1,\ldots,k\}$ such that $x\alpha = a_{i+j}$ and
$$
\begin{array}{rcl}
(x)\beta\gamma_1^{(0)}\gamma_2^{(0)}\cdots\gamma_1^{(k)}\gamma_2^{(k)}\delta 
&=& (x)\gamma_1^{(j)}\gamma_2^{(j)}\cdots\gamma_1^{(k)}\gamma_2^{(k)}\delta \\
&=& (0) \gamma_2^{(j)}\cdots\gamma_1^{(k)}\gamma_2^{(k)}\delta \\
&=& (a_{i+j})\gamma_1^{(j+1)}\gamma_2^{(j+1)}\cdots\gamma_1^{(k)}\gamma_2^{(k)}\delta \\
&=& a_{i+j}\\
&=& x\alpha; 
\end{array}
$$

\item  If $x > 0$ and $x\alpha > 0$ then
$$
(x)\beta\gamma_1^{(0)}\gamma_2^{(0)}\cdots\gamma_1^{(k)}\gamma_2^{(k)}\delta = x\delta = x\alpha.
$$
\end{enumerate}

Thus $\alpha\in \langle J\rangle$ and so $J_f\subseteq\langle J\rangle$, as required. 
\end{proof}

The following observation will be useful in the proof of our next result. 

\begin{note}\label{ex}
Let $X$ be an infinite chain. Then $J_f$ contains elements of arbitrary finite (non null) rank. In fact, for all $n\in\N$ and $x_1,x_2,\ldots,x_n\in X$, 
with $x_1<x_2<\cdots<x_n$, we may construct transformations $\alpha\in\O(X)$ such that $\im(\alpha)=\{x_1,x_2,\ldots,x_n\}$. For instance, 
the transformation $\alpha$ on $X$ defined by
$$
x\alpha=\left\{
\begin{array}{ll}
x_1, & x\le x_1\\
x_i, & x_{i-1}<x\le x_i \mbox{ and } 2\le i\le n-1\\
x_n, & x_{n-1}<x
\end{array}
\right. 
$$
belongs to $J_f$. 
\end{note}

Notice that if $X$ is an infinite countable chain then $J_f=\O(X) \setminus J$. 
Thus, in this case, by the previous lemma, we obtain that $\O(X) = \langle J\rangle$ if and only if $X^0 \neq \emptyset$. Furthermore, we have: 

\begin{theorem}\label{mainth}
Let $X$ be an infinite countable chain. The following properties are equivalent:
\begin{enumerate}
\item $\O(X) = \langle J\rangle$, i.e. $\rank(\O(X):J)=0$; 
\item $\rank(\O(X):J)<\aleph_0$; 
\item $X^0 \neq \emptyset$.
\end{enumerate} 
\end{theorem}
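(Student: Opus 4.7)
The equivalence $(1)\Leftrightarrow(3)$ is essentially immediate from the discussion preceding the statement (the Main Lemma combined with the identity $J_f=\O(X)\setminus J$ valid for countable infinite $X$), and $(1)\Rightarrow(2)$ is trivial because $0<\aleph_0$. The substantive content is therefore $(2)\Rightarrow(3)$, which I plan to establish contrapositively: assuming $X^0=\emptyset$, I will show that $\langle J\cup B\rangle\ne\O(X)$ for every finite subset $B$ of $\O(X)$, so that $\rank(\O(X):J)\ge\aleph_0$.

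Fix such a $B$. Since $\O(X)=J\sqcup J_f$ in the countable case, I may replace $B$ by $B\setminus J$ and assume $B\subseteq J_f$ (and, if one wishes, $B\ne\emptyset$); set $M=\max\{|\im(\beta)|:\beta\in B\}$, a finite non-negative integer. The main observation I will use is the following cardinality bound: if $\alpha=\gamma_1\cdots\gamma_n\in\langle J\cup B\rangle$ has at least one factor $\gamma_j\in B$, then the inclusion $X\gamma_1\cdots\gamma_{j-1}\gamma_j\subseteq X\gamma_j=\im(\gamma_j)$ yields $|\im(\alpha)|\le|\im(\gamma_j)|\le M$. Consequently every element of $\langle J\cup B\rangle$ either lies in $\langle J\rangle$ or has image of cardinality at most $M$, so it suffices to produce a single $\alpha^*\in\O(X)$ with $|\im(\alpha^*)|=M+1$ and $\alpha^*\notin\langle J\rangle$.

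To build $\alpha^*$ I will apply the recipe of Note~\ref{ex} to $M+1$ carefully chosen points and then invoke Lemma~\ref{x+} or Lemma~\ref{x0} (both of which exploit $X^0=\emptyset$) to force $\alpha^*\notin\langle J\rangle$. If $X=X^+$ or $X=X^-$, then any finite-image endomorphism has a minimum (respectively maximum) in its image and is therefore excluded from $\langle J\rangle$ by Lemma~\ref{x+}, so any $\alpha^*$ from Note~\ref{ex} with $M+1$ image points does the job. If instead both $X^-$ and $X^+$ are nonempty, then $X$ being infinite forces at least one of them to be infinite; say $X^-$ is infinite (the other case is symmetric). Choose $x_1<x_2<\cdots<x_{M+1}$ all inside $X^-$: the Note~\ref{ex} construction sends every element strictly above $x_M$, and in particular all of $X^+$, to $x_{M+1}\in X^-$, so $X^+\alpha^*\cap X^-\ne\emptyset$ and Lemma~\ref{x0} gives $\alpha^*\notin\langle J\rangle$.

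The only mildly delicate point is the cardinality bound under the paper's right-composition convention $x(\alpha\beta)=(x\alpha)\beta$, but once the $B$-factor $\gamma_j$ is isolated the estimate is routine. Assembling these pieces produces, for every finite $B$, an explicit element of $\O(X)$ outside $\langle J\cup B\rangle$, which yields $\rank(\O(X):J)\ge\aleph_0$, completes $(2)\Rightarrow(3)$, and finishes the theorem.
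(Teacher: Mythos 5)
Your proposal is correct and follows essentially the same route as the paper: the paper shows that any generating set must contain elements of $J_f$ of every sufficiently large finite rank (hence infinitely many), while you phrase the contrapositive — a finite $B$ caps the attainable image size outside $\langle J\rangle$ at $M$, so a rank-$(M+1)$ element avoiding $\langle J\rangle$ cannot be generated — using the identical ingredients (the Note~\ref{ex} construction with points chosen in the infinite one of $X^-$, $X^+$, the image-cardinality bound from the proof of Note~\ref{n2}, and Lemmas~\ref{x0} and~\ref{x+} via the same case split on whether $X^-$ or $X^+$ is empty). The only loose end, which you already flag, is the degenerate case $B\cap J_f=\emptyset$, where one takes $M=0$ or appeals directly to the Main Lemma.
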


\begin{proof}
Notice that 1 trivially implies 2 and, by the previous lemma, 3 implies 1, whence it remains to prove that 2 implies 3.   
Thus, suppose that  $X^0=\emptyset$. Let $\mathcal{C}$ be a generating set of $\O(X)$. 

First, we admit that $X^-\ne\emptyset$ and $X^+\ne\emptyset$. 
As $X^0=\emptyset$, we must have $|X^-|=\aleph_0$ or $|X^+|=\aleph_0$. 
Suppose, without loss of generality, that $|X^-|=\aleph_0$ (the case $|X^+|=\aleph_0$ can be treated dually). 
Hence, given $n\in\N$, we may consider $n$ elements  $x_1,x_2,\ldots,x_n\in X^-$, 
with $x_1<x_2<\cdots<x_n$, and the transformation $\alpha\in\O(X)$ such that $\im(\alpha)=\{x_1,x_2,\ldots,x_n\}$ constructed in Note \ref{ex}. 
Then, for any $x^+\in X^+$, we have $x^+\alpha=x_n\in X^-$ and so, accordingly with Note \ref{n2}, $\mathcal{C}$ contains a transformation $\beta\in\O(X)$ such that $n=|\im(\alpha)|\le|\im(\beta)|<\aleph_0$. Thus, as $n\in\N$ is arbitrary, $\mathcal{C}$ must contain an infinite number of elements of $J_f$. 

On the other hand, admit that $X^+=\emptyset$ or $X^-=\emptyset$. Then, by Lemma \ref{x+}, we have $J_f\cap\langle J\rangle=\emptyset$. 
Let $n\in\N$, let $x_1,x_2,\ldots,x_n\in X$ be 
such $x_1<x_2<\cdots<x_n$ and consider the transformation $\alpha\in\O(X)$ such that $\im(\alpha)=\{x_1,x_2,\ldots,x_n\}$ constructed in Note \ref{ex}. Let $\alpha_1,\ldots,\alpha_k\in\mathcal{C}$ ($k\in\N$) be such that $\alpha=\alpha_1\cdots\alpha_k$.  
Since $J_f\cap\langle J\rangle=\emptyset$ and $\alpha\in J_f$, we must have $\alpha_i\not\in J$, for some $i\in\{1,\ldots,k\}$. 
Hence, $n=|\im(\alpha)|\le|\im(\alpha_i)|<\aleph_0$ (check the proof of Note \ref{n2}). 
Thus, as $n\in\N$ is arbitrary, also in this case, $\mathcal{C}$ must contain an infinite number of elements of $J_f$. 

Therefore, $\rank(\O(X):J)\ge\aleph_0$, as required. 
\end{proof}

Recall that, for $X\in \{\Z,\Q\}$, with the usual order, we have $X^0=X$. Therefore, 
as an immediate consequence of the last theorem, we obtain:

\begin{corollary}
Let $X\in \{\Z,\Q\}$, with the usual order. Then $\O(X)=\langle J\rangle$. 
\end{corollary}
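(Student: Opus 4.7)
The plan is to deduce the corollary directly from Theorem~\ref{mainth} by verifying its third (and simplest) equivalent condition, namely $X^0\neq\emptyset$, in each of the two cases $X=\Z$ and $X=\Q$ with the usual order. Since both of these chains are infinite and countable, Theorem~\ref{mainth} applies, and once $X^0\neq\emptyset$ is established the conclusion $\O(X)=\langle J\rangle$ follows at once.

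First I would recall from the paragraph preceding Lemma~1.1 (the third example in the bulleted list) that the authors already point out $X^-=\emptyset$, $X^0=X$ and $X^+=\emptyset$ for $X\in\{\Z,\Q,\R\}$ with their usual orders. So in principle a one-line proof suffices: the corollary follows immediately from Theorem~\ref{mainth} and that observation. For completeness I would nonetheless justify the observation in the two cases of interest.

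For $X=\Z$, fix an arbitrary $x\in\Z$. Then $(x]=\{y\in\Z\mid y\le x\}$ is order-isomorphic to the nonpositive integers and $[x)=\{y\in\Z\mid y\ge x\}$ is order-isomorphic to the nonnegative integers, so both are countably infinite; hence $|(x]|=|\Z|=|[x)|$, which means $x\in X^0$. Thus $X^0=\Z\neq\emptyset$. For $X=\Q$, fix $x\in\Q$; then $(x]$ contains the set $\{x-n\mid n\in\N\}$ and $[x)$ contains $\{x+n\mid n\in\N\}$, so each of $(x]$ and $[x)$ is countably infinite, giving $x\in X^0$ and therefore $X^0=\Q\neq\emptyset$.

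In both cases, the third condition of Theorem~\ref{mainth} holds, hence the first condition holds, that is, $\O(X)=\langle J\rangle$. There is no real obstacle here: the entire content of the corollary is absorbed by the main theorem, and the only task is to verify $X^0\neq\emptyset$, which is transparent from the description of $(x]$ and $[x)$ in $\Z$ and $\Q$.
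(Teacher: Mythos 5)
Your proposal is correct and follows exactly the paper's route: the paper also deduces the corollary immediately from Theorem~\ref{mainth} by recalling that $X^0=X\neq\emptyset$ for $X\in\{\Z,\Q\}$ with the usual order. Your explicit verification that every point of $\Z$ and $\Q$ lies in $X^0$ is a harmless elaboration of the observation the paper already made in its list of examples.
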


Notice that, for the chain $X= \overset{\mbox{\scalebox{0.4}{$\rightarrow\!\!\cdot\!\!\leftarrow$}}}{\Z}$ defined in the beginning of this section, we have $X^0=\{0\}$, whence also in this case $\O(X)=\langle J\rangle$. 

On the contrary, we have: 
\begin{proposition}
With the usual order of $\N$, we have $\rank(\O(\N):J)=\aleph_0$.
\end{proposition}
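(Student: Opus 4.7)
The plan is to establish $\rank(\O(\N):J) = \aleph_0$ by proving the two inequalities separately. For the lower bound $\rank(\O(\N):J) \ge \aleph_0$, I will invoke Theorem \ref{mainth}. Since every $x \in \N$ has a finite down-set $(x]$, we have $X^- = \N$ and in particular $X^0 = \emptyset$; hence condition (3) of the theorem fails, and by the stated equivalence condition (2) also fails, which gives $\rank(\O(\N):J) \ge \aleph_0$ immediately.

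For the reverse inequality I will exhibit a countable subset $B \subseteq \O(\N)$ with $\langle J \cup B \rangle = \O(\N)$. The simplest choice is $B = J_f$: since $\N$ is countable, $\O(\N) = J \cup J_f$, so $J \cup B = \O(\N)$ and the generation is trivial. What remains is the cardinality estimate $|J_f| \le \aleph_0$. For this I would argue that each $\alpha \in J_f$ has finite image $\{x_1 < \cdots < x_n\}$ for some $n \in \N$, and that the order-preservation of $\alpha$ forces the fibers $\alpha^{-1}(x_i)$ to be consecutive $\le$-intervals of $\N$. Consequently $\alpha$ is completely determined by the finite image set together with the $n-1$ thresholds in $\N$ separating its fibers; both data are chosen from countable sets and $n$ ranges over $\N$, so $|J_f| = \aleph_0$. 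Combining the two bounds yields the claim.

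I do not expect any real obstacle here: the substantive content of the statement has already been absorbed into Theorem \ref{mainth}, and the remaining work is an elementary cardinality count that relies only on the fact that order-preservation forces the fibers of a finite-image endomorphism to be convex. The only point worth treating carefully is the observation that $J \cup J_f$ exhausts $\O(\N)$, which in turn uses precisely the countability of $\N$ and so sits exactly within the hypotheses available.
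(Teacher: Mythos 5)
Your proposal is correct and follows essentially the same route as the paper: the lower bound comes from Theorem \ref{mainth} via $X^0=\emptyset$, and the upper bound from $J_f=\O(\N)\setminus J$ together with the count $|J_f|=\aleph_0$, obtained exactly as in the paper by parametrizing a finite-image order-preserving map by its image and the $n-1$ thresholds (the paper uses $\min x_i\alpha^{-1}$) cutting $\N$ into the convex fibers. No gaps.
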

\begin{proof} We already observed that $X^0=\emptyset$, for $X=\N$ equipped with the usual order. 
Hence, by Theorem \ref{mainth}, we obtain $\rank(\O(\N):J)\ge\aleph_0$. 
On the other hand, since $J_f=\O(\N)\setminus J$, we have  $\rank(\O(\N):J)\le |J_f|$. 
Therefore, this result follows by showing that $|J_f|=\aleph_0$. In fact, for each $n\in\N$ and each fixed subset $\{x_1,\ldots,x_n\}$ of $\N$ with $n$ elements, we have a bijection between the set $\{\alpha\in\O(\N)\mid\im(\alpha)=\{x_1,\ldots,x_n\}\}$ and the set $\mathscr{P}_{n-1}(\N\setminus\{1\})$ of all subsets of $\N\setminus\{1\}$ with $n-1$ elements, namely  
$\alpha\longmapsto\{\mathrm{min}\,x_i\alpha^{-1}\mid i=2,\ldots,n\}$. Thus, 
since the set $\mathscr{P}_f(\N)$ of all finite subsets of $\N$ has cardinal $\aleph_0$, 
 $J_f$ is an infinite countable union of infinite countable sets and so $|J_f|=\aleph_0$, as required.  
\end{proof}

Observe that our Main Lemma gives us a necessary condition for having $\O(X)=\langle J\rangle$, namely $X^0\ne\emptyset$. 
We finish this note by presenting a sufficient condition: 

\begin{theorem} 
Let $X$  be an infinite chain such that $X\setminus X^0$ is finite. Then $\O(X)=\langle J\rangle$.
\end{theorem}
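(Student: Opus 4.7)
The plan is to dichotomize $\alpha\in\O(X)$ into two cases and reduce each to what is already known. Since $X$ is infinite and $X\setminus X^0$ is finite, we have $X^0\neq\emptyset$, so the Main Lemma applies.

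In the first case, $X^0\alpha\subseteq X\setminus X^0$. Then $\im(\alpha)=X^0\alpha\cup(X\setminus X^0)\alpha$ is a union of two finite sets (the first by the case hypothesis, the second because $X\setminus X^0$ is finite), so $\alpha\in J_f$; the Main Lemma then yields $\alpha\in\langle J\rangle$.

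In the second case, $X^0\alpha\cap X^0\neq\emptyset$, so I pick $p\in X^0$ with $p\alpha\in X^0$. Replacing $\alpha$ by its order-dual if necessary, I may assume $p\alpha\leq p$. I then write $\alpha=\beta\delta$ by setting
$$
x\beta=\left\{\begin{array}{ll}
x\alpha & \mbox{if } x\leq p, \\
x & \mbox{if } x>p,
\end{array}\right.
\qquad
x\delta=\left\{\begin{array}{ll}
x & \mbox{if } x\leq p\alpha, \\
p\alpha & \mbox{if } p\alpha<x\leq p, \\
x\alpha & \mbox{if } x>p.
\end{array}\right.
$$
Three routine checks conclude this case: (i) $\beta,\delta\in\O(X)$, using $p\alpha\leq p$ and the monotonicity of $\alpha$; (ii) $\beta\delta=\alpha$, by splitting on $x\leq p$ (where $x\alpha\leq p\alpha$ forces $(x\alpha)\delta=x\alpha$) versus $x>p$; (iii) $\beta,\delta\in J$, because $\{x\in X:x>p\}\subseteq\im(\beta)$ and $(p\alpha]\subseteq\im(\delta)$, and both of these sets have cardinality $|X|$ thanks to $p,\,p\alpha\in X^0$.

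The technical heart is Case 2, where the factorization hinges on placing both $p$ and $p\alpha$ inside $X^0$ so that $\im(\beta)$ has sufficient room above $p$ and $\im(\delta)$ has sufficient room below $p\alpha$. This double placement is exactly the negation of Case 1, in which the image of $\alpha$ is forced into the finite set $(X\setminus X^0)\cup(X\setminus X^0)\alpha$, yielding $\alpha\in J_f$ and putting the Main Lemma to work. The finiteness hypothesis on $X\setminus X^0$ is used precisely to render $\alpha\in J_f$ in Case 1.
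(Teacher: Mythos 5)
Your proposal is correct and follows essentially the same route as the paper: the same dichotomy on whether $X^0\alpha$ meets $X^0$, the same appeal to the Main Lemma (via $\alpha\in J_f$) in the degenerate case, and essentially the same two-factor decomposition $\alpha=\beta\delta$ in the main case (the paper writes out both order configurations explicitly where you invoke duality). No gaps.
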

\begin{proof}
Notice that, $X^-$ and $X^+$ are both finite sets and $|X^0|=|X|$.

Take $\alpha\in\O(X)$. 

First, suppose that $X^0\alpha\cap X^0\ne\emptyset$. Fix $u,v\in X^0$ such that $u\alpha=v$.
If $u\le v$, we define transformations $\alpha_1$ and $\alpha_2$ of $\O(X)$ by
$$
x\alpha_1
=
\left\{
\begin{array}{ll}
x, &  x<u\\
x\alpha, & u\le x
\end{array}
\right.
\quad\text{and}\quad
x\alpha_2
=
\left\{
\begin{array}{ll}
x\alpha, & x< u\\
v, & u\le x < v\\
x, &  v\le x.
\end{array}
\right.
$$
On the other hand, if $v<u$, we define transformations $\alpha_1$ and $\alpha_2$ of $\O(X)$ by
$$
x\alpha_1
=
\left\{
\begin{array}{ll}
x\alpha, &  x\le u\\
x, &  u < x
\end{array}
\right.
\quad\text{and}\quad
x\alpha_2
=
\left\{
\begin{array}{ll}
x, & x< v\\
v, & v\le x < u\\
x\alpha, &  u\le x.
\end{array}
\right.
$$

It is a routine matter to show that both cases satisfy $\alpha_1,\alpha_2\in J$ and $\alpha=\alpha_1\alpha_2$. Then $\alpha\in\langle J\rangle$. 

On the other hand,  suppose that $X^0\alpha\cap X^0=\emptyset$. Then $\alpha\in J_f$ and so, 
by our Main Lemma, we obtain again $\alpha\in\langle J\rangle$, as required.  
\end{proof}

Clearly, the converse of this property is not valid in general, 
as the example $X=\overset{\mbox{\scalebox{0.4}{$\rightarrow\!\!\cdot\!\!\leftarrow$}}}{\Z}$ shows. 
Nevertheless, as an immediate application, for the usual chain of real numbers, we have:

\begin{corollary}
With the usual order of $\R$, we have $\O(\R)=\langle J\rangle$. 
\end{corollary}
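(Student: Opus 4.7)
The plan is to take an arbitrary $\alpha\in\O(X)$ and split into cases according to whether $X^0\alpha\cap X^0$ is empty. First observe that, since $X$ is infinite while $X\setminus X^0=X^-\cup X^+$ is finite, we have $|X^0|=|X|$ and in particular both $X^-$ and $X^+$ are finite.

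If $X^0\alpha\cap X^0=\emptyset$, then $X^0\alpha\subseteq X\setminus X^0$, which is finite, and $(X\setminus X^0)\alpha$ is finite as well, so $\im(\alpha)$ is finite, i.e.\ $\alpha\in J_f$. Since $X^0\ne\emptyset$, the Main Lemma applies and gives $\alpha\in\langle J\rangle$ immediately.

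Otherwise, pick $u,v\in X^0$ with $u\alpha=v$, and the aim is to factor $\alpha=\alpha_1\alpha_2$ with $\alpha_1,\alpha_2\in J$. I would treat the subcases $u\le v$ and $v<u$ as dual. In the first, define $\alpha_1$ to be the identity on $\{x\in X\mid x<u\}$ and to coincide with $\alpha$ on $[u)$; and define $\alpha_2$ to coincide with $\alpha$ on $\{x\in X\mid x<u\}$, to be the constant $v$ on $\{x\in X\mid u\le x<v\}$, and to be the identity on $[v)$. The case $v<u$ is handled by the obvious dual construction, swapping the roles of the two sides of $u$.

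Three routine verifications then finish the proof. (i) Order-preservation: for $\alpha_1$ the only delicate seam is at $u$, where $x_1<u\le x_2$ gives $x_1<u\le v=u\alpha\le x_2\alpha$; for $\alpha_2$ one checks the two seams at $u$ and at $v$, both using $x\alpha\le u\alpha=v$ whenever $x<u$. (ii) $\alpha=\alpha_1\alpha_2$: this is immediate when $x<u$, and when $x\ge u$ one uses $x\alpha\ge v$, so that $\alpha_2$ fixes $x\alpha$. (iii) $\alpha_1,\alpha_2\in J$: indeed $\im(\alpha_1)\supseteq\{x\in X\mid x<u\}$ and $\im(\alpha_2)\supseteq[v)$, each of cardinality $|X|$ since $u,v\in X^0$. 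The only real obstacle is being careful at the seams when checking order-preservation; no deeper difficulty arises, and the construction reduces the whole theorem to the Main Lemma together with one clean two-factor decomposition at a fixed element of $X^0$.
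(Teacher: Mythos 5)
Your proposal is correct and follows essentially the same route as the paper: the paper derives this corollary immediately from its preceding theorem (that $X\setminus X^0$ finite implies $\O(X)=\langle J\rangle$), whose proof uses exactly your case split on $X^0\alpha\cap X^0$, the same appeal to the Main Lemma in the finite-image case, and the same two-factor decomposition $\alpha=\alpha_1\alpha_2$ at a point $u\in X^0$ with $u\alpha=v$. The only difference is that you unfold that theorem's proof rather than citing it; for $\R$ one even has $X^0=X$, so your first case is vacuous.
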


\lastpage 

\end{document}